\documentclass[11pt]{article}

\usepackage[letterpaper, hmargin=1in, top=1in, bottom=1.1in, footskip=0.6in]{geometry}

\usepackage{titlesec}
\titleformat{\subsection}[runin]{\normalfont\bfseries}{\thesubsection.}{.5em}{}[.]\titlespacing{\subsection}{0pt}{2ex plus .1ex minus .2ex}{.8em}
\titleformat{\subsubsection}[runin]{\normalfont\itshape}{\thesubsubsection.}{.3em}{}[.]\titlespacing{\subsubsection}{0pt}{1ex plus .1ex minus .2ex}{.5em}
\titleformat{\paragraph}[runin]{\normalfont\itshape}{\theparagraph.}{.3em}{}[.]\titlespacing{\paragraph}{0pt}{1ex plus .1ex minus .2ex}{.5em}

\usepackage{amsmath}
\usepackage{amssymb}
\usepackage{amsfonts}
\usepackage{latexsym}
\usepackage{amsthm}
\usepackage{amsxtra}
\usepackage{amscd}
\usepackage{bbm}
\usepackage{mathrsfs}
\usepackage{bm}
\usepackage{tensor}

\usepackage{graphicx, color}

\definecolor{darkred}{rgb}{0.9,0,0.3}
\definecolor{darkblue}{rgb}{0,0.3,0.9}

\definecolor{vdarkred}{rgb}{0.7,0,0.2}
\definecolor{vdarkblue}{rgb}{0,0.2,0.7}
\usepackage[pdftex, colorlinks, linkcolor=vdarkblue,urlcolor=black,citecolor=vdarkred]{hyperref}

\usepackage[nottoc,notlof,notlot]{tocbibind}
\usepackage{cite}

\numberwithin{equation}{section}
\numberwithin{figure}{section}

\theoremstyle{plain} 
\newtheorem{theorem}{Theorem}[section]
\newtheorem*{theorem*}{Theorem}
\newtheorem{lemma}[theorem]{Lemma}
\newtheorem*{lemma*}{Lemma}

\newtheorem*{corollary*}{Corollary}

\newtheorem*{proposition*}{Proposition}

\newtheorem*{conjecture*}{Conjecture}

\theoremstyle{definition} 

\newtheorem*{definition*}{Definition}

\newtheorem*{example*}{Example}

\newtheorem*{remark*}{Remark}

\newtheorem*{assumption*}{Assumption}

\renewcommand{\b}[1]{\boldsymbol{\mathrm{#1}}} 
\newcommand{\bb}{\mathbb} 
\renewcommand{\cal}{\mathcal}

\newcommand{\Q}{\mathbb{Q}}
\newcommand{\e}{\mathrm{e}}

\newcommand{\ii}{\mathrm{i}}

\newcommand*{\deq}{\mathrel{\vcenter{\baselineskip0.65ex \lineskiplimit0pt \hbox{.}\hbox{.}}}=}

\renewcommand{\leq}{\leqslant}

\renewcommand{\geq}{\geqslant}

\renewcommand{\epsilon}{\varepsilon}

\newcommand{\qq}[1]{[\![{#1}]\!]}

\DeclareMathOperator{\diag}{diag}
\DeclareMathOperator{\tr}{Tr}

\DeclareMathOperator{\re}{Re}

\newcommand*{\rom}[1]{\expandafter\@slowromancap\romannumeral #1@}

\title{\bf \Large On eigenvectors of the SYK model \vspace{0.5em}}

\author{Yukun He\footnote{Shanghai Center for Mathematical Sciences and School of Mathematical Sciences, Fudan University. Email: \href{mailto:heyukun@fudan.edu.cn}{heyukun@fudan.edu.cn}. }\vspace{1em}}

\begin{document}
\maketitle

\begin{abstract}
	We consider the Sachdev--Ye--Kitaev model with $N$ Majorana fermions and interaction order $q$. For even $q\in \{2,4,...,N-2\}$, we give an elementary proof and show that the eigenvectors of the model are completely delocalized in all deterministic directions. 
\end{abstract}

\section{Introduction}
Throughout the paper, let $N,q$ be even integers such that $2\leq q\leq N-2$. We abbreviate $\qq{N}\deq \{1,2,...,N\}$ and $L\deq2^{N/2}$. Consider the Hamiltonian
\[
H=\sum_{A\subset \qq{N}, |A|=q} J_A\Psi_A\,,
\] 
where $J_A$ are independent standard Gaussian random variables, and $\Psi_A \in \bb C^{L\times L}$ are deterministic and satisfy
\begin{equation} \label{1.1}
	\Psi_A^2=I\,,\quad\Psi_A \Psi_B =(-1)^{|A\cap B|}\Psi_B \Psi_A\,.
\end{equation}
The explicit form of $\Psi_A$ is constructed in Section  \ref{sec2}. This (up to a normalization factor) is the Sachdev--Ye--Kitaev (SYK) model
\cite{SY93,K15}, a quantum many-body system that has become one of the
most studied models in theoretical physics over the past decade. There has been many advances in understanding the spectrum and free energy of the model \cite{BKM26,BC26,FengTianWei1,FengTianWei2,FengTianWei3,GPSZ26,GJV18,GGV16,GGV17,H26,JV18,PR16}. For more details, please refer to \cite{H26} and the references within. 

In this article, we study the eigenvectors of the SYK model. It is a fundamental question for any many-body system that whether it is chaotic (usually with delocalized eigenvectors), or regular (usually with localized eigenvectors). The literature on this aspect of the SYK model is very limited \cite{HG19,SC17}. The recent paper \cite{BC26} proves that for fixed $q\geq 4$, the bulk eigenvectors of $H$ have the $\ell^\infty$ bound
\[
N^{-1/4+o(1)}\asymp(\log L)^{-1/4+o(1)}
\]
in any deterministic directions. Remark 1.12 of \cite{BC26} also mentions that Gaussian rotation invariance of the SYK model allows one to prove complete delocalization in occupation basis, but this restriction to basis is essential to that argument.

In this paper, we prove delocalization of the SYK model in full generality. We may now state our main result.

\begin{theorem} \label{thm1.1}
	Let $(\b u_\alpha)_{\alpha=1}^L$ be the unit eigenvectors of $H$. Let $\b w\in \bb C^L$ be a deterministic unit vector. Then for any $c>0$,
	\begin{equation} \label{thmbound}
		\bb P\big(	\max_{\alpha} |\langle \b u_{\alpha}, \b w\rangle |\geq L^{-1/2}\e^{c\sqrt{N \log N}} \big)\leq \exp\Big(-\Big(4c^2-\frac{\log 2}{2}+o(1)\Big)N\Big)\,.
	\end{equation}
\end{theorem}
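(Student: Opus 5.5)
The plan is to combine a deterministic symmetry of the model with a union bound over eigenvectors and a high-moment bound over a random choice of Majorana monomial. The key symmetry comes from \eqref{1.1}. For any $B\subset\qq{N}$ we have
\[
\Psi_B H\Psi_B^{*}=\sum_{|A|=q}(-1)^{|A\cap B|}J_A\Psi_A .
\]
The $J_A$ are independent symmetric Gaussians, so $\Psi_BH\Psi_B^*\overset{d}{=}H$. (The $\Psi_B$ are the products of the basic Majorana operators built in Section~\ref{sec2}; we use $\Psi_B$ unitary and $\Psi_B^2=\pm I$.) The eigenvectors of $\Psi_BH\Psi_B^*$ are $\Psi_B\b u_\alpha$. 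Hence the law of $\max_\alpha|\langle\b u_\alpha,\b w\rangle|$ equals the law of $\max_\alpha|\langle\b u_\alpha,\Psi_B^*\b w\rangle|$ for every fixed $B$. This holds even with degenerate eigenvalues, if we phrase it via spectral projections or any measurable choice of eigenbasis.

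We then average over $B$ drawn uniformly from all $2^N=L^2$ subsets, independently of $H$. Writing $t=L^{-1/2}\e^{c\sqrt{N\log N}}$, Markov's inequality gives, for each integer $k\ge1$,
\[
\bb P\Big(\max_\alpha|\langle\b u_\alpha,\b w\rangle|\ge t\Big)\le \sum_{\alpha=1}^L \bb E_H\,\bb E_B\,\frac{|\langle \b u_\alpha,\Psi_B^*\b w\rangle|^{2k}}{t^{2k}} .
\]
It therefore suffices to bound, uniformly over deterministic unit vectors $\b u,\b w$,
\[
M_k\deq \bb E_B|\langle\b u,\Psi_B^*\b w\rangle|^{2k}=L^{-2}\sum_{B}|\tr(\Psi_B^*\rho)|^{2k},\qquad \rho=\b w\b u^*.
\]
Now $\{L^{-1/2}\Psi_B\}_B$ is an orthonormal basis of $\bb C^{L\times L}$ in the Hilbert--Schmidt inner product, because $\tr(\Psi_A^*\Psi_B)=L\delta_{AB}$ by \eqref{1.1} and tracelessness of nontrivial monomials. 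This immediately gives $M_1=L^{-1}$. For higher $k$ we write $|\tr\Psi_B^*\rho|^{2k}=\tr\big((\Psi_B^*\otimes\overline{\Psi_B^*})^{\otimes k}(\rho\otimes\bar\rho)^{\otimes k}\big)$. The key object is then the average $\Pi_k\deq L^{-2}\sum_B(\Psi_B\otimes\overline{\Psi_B})^{\otimes k}$.

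Using the Clifford relations, this average can be computed explicitly. Factor $B$ into its intersections with the $N/2$ qubit pairs, so that $\Psi_B$ is a tensor product of single-qubit Paulis up to signs from the Jordan--Wigner strings. Then $\Pi_k$ becomes, up to a controlled sign twist, a tensor power over $N/2$ sites of the single-site Pauli average. The single-site average is a projection onto the span of Pauli-stabilizer-type states on $2k$ copies. The target estimate is
\[
M_k\le L^{-k}\,\exp\big(\tfrac{1}{4}k^2\log N\,(1+o(1))\big)\quad\text{(or at least } (k^{O(1)}/L)^{k}\text{ with the right constant),}
\]
obtained by bounding $\langle(\rho\otimes\bar\rho)^{\otimes k},\Pi_k\rangle$ through the dimension-counting of pairings among the $2k$ copies: matchings give the main term $k!\,L^{-k}$, and the remaining stabilizer configurations are counted.

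This moment estimate is where I expect the main obstacle. A naive Gaussian-type bound $k!L^{-k}$ is probably false for adversarial $\b w$, such as stabilizer states aligned with the $\Psi_B$. The $e^{O(k^2\log N)}$ loss has to be tracked precisely enough to produce the constant $4c^2$.

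Granting the moment bound, the union bound gives
\[
\bb P\le L\cdot L^{-k}t^{-2k}\e^{\frac14k^2\log N}=\exp\Big(\tfrac{\log2}{2}N-2kc\sqrt{N\log N}+\tfrac14k^2\log N\Big).
\]
Choosing $k=\lfloor 4c\sqrt{N/\log N}\rfloor$ makes the exponent $\tfrac{\log 2}{2}N-4c^2N(1+o(1))$. This is exactly \eqref{thmbound}.
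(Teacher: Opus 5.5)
There is a genuine gap, and it is exactly at the step you flagged: the target moment bound is false, not merely hard. Your averaging unitary $\Psi_B^*$ takes only $2^N=L^2$ values, and because $\b u=\b u_\alpha$ depends on $H$, you need the bound uniformly in $\b u$.

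Take $\b u=\b w$ and keep only the term $B=\emptyset$. This already gives $M_k\ge L^{-2}$. But $L^{-k}\e^{\frac14k^2\log N}\ll L^{-2}$ for every fixed $k\ge3$, and also for $k\asymp\sqrt{N/\log N}$.

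The adversarial directions you mention are far worse. Let $\b w$ be a common eigenvector of the commuting family $\Gamma_i=\ii\psi_{2i-1}\psi_{2i}$. Each $\Psi_B$ commutes or anticommutes with each $\Gamma_i$. Hence $\Psi_B^*\b w$ is, up to a phase, uniformly distributed over the $L$ vectors of the common eigenbasis. For $\b u$ any one of them, $|\langle\b u,\Psi_B^*\b w\rangle|=1$ with probability $1/L$, so $M_k=L^{-1}$ for all $k$. That is a loss of $L^{k-1}$, not $\e^{O(k^2\log N)}$, and the union bound over the $L$ eigenvectors becomes trivial.

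No refined counting of stabilizer configurations in $\Pi_k$ can repair this. For any uniform average over $S$ fixed unitaries $V$, one has $\sup_{\b u}\mathbb E|\langle\b u,V\b w\rangle|^{2k}\ge 1/S$. So reaching $L^{-k}$ with $k\asymp\sqrt{N/\log N}$ needs vastly more randomness than the $2^N$ sign flips supply.

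The missing idea is that the Gaussian law of $(J_A)$ is invariant under a continuous group, and your $\Psi_B$ form only a finite subgroup of it. For any $\b a\in\mathbb S^{N-1}$, conjugation by $\psi(\b a)=\sum_i a_i\psi_i$ sends $\psi(\b z)$ to $\psi(2\langle\b a,\b z\rangle\b a-\b z)$. This is an orthogonal change of Majorana basis. It therefore acts orthogonally on the real span of $\{\Psi_A\}_{|A|=q}$ with respect to $L^{-1}\tr(XY)$, so $H\overset d=\psi(\b a)H\psi(\b a)$. Your $\Psi_B$ are the special case where $\b a$ runs over coordinate vectors.

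The paper builds a random product $\mathcal U$ of such factors, with the $\b a$'s drawn uniformly on spheres, recursively along the $2\times2$ block structure of the construction. It then proves by induction that $\mathbb E|\tr(AU_m)|^{2k}\le\mu_m(A)^k(1+4m/k)^{k(k-1)/2}$ for every deterministic $A$. Each level contributes factors bounded by $\mathbb E(1+x_1)^k$ with $\b x$ uniform on a sphere. Applied to $A=\b u_+\b w_+^*+\b u_-\b w_-^*$, this is exactly the uniform-in-$\b u$ bound your union bound needs.

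Separately, your claim that degeneracies are harmless ``via spectral projections'' is too quick. Converting $\|P_\lambda\b w\|$ into single-vector moments costs a factor depending on the multiplicity. The paper supplies a lemma that the eigenspaces of $H$ have dimension at most two almost surely, which costs only a factor $\sqrt2$.
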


Theorem \ref{thm1.1} shows that for all interaction orders $2\leq q\leq N-2$, all eigenvectors of $H$ are completely delocalized in all deterministic directions, with the near-optimal $\ell^\infty$ bound
\[
L^{-1/2+O(\sqrt{\log N/N})}\,.
\] 
Interestingly, the SYK system is integrable when $q=2$, yet its eigenvectors are nevertheless delocalized.

For the proof of Theorem \ref{thm1.1}, recall that for the Gaussian Unitary Ensemble, as the model is unitary invariant, much of its eigenvector information is immediate. The SYK model does not enjoy this full invariance. However, by examining the structure of $\Psi$ and the Gaussianity of $J$, we show that the model is invariant under a special class of unitary matrices, which are introduced in Section \ref{sec2} as spin rotations. We further construct a random spin rotation $\cal U$, such that every spectral projector of $H$ satisfies
\[
P_\lambda(H)\overset{d}{=}P_\lambda(\cal UH\cal U^*)=\cal U P_\lambda(H)\cal U^*\,.
\] 
In addition, the randomness of $\cal U$ allows us to estimate arbitrary moments of $\langle \b w , \cal UP_\lambda \cal U^* \b w \rangle $ for any deterministic $\b w$. The resulting bound has a dependency on eigenvalue multiplicity, and we conclude the proof by showing that the eigenspaces of $H$ have rank at most two almost surely.

\subsection*{Acknowledgment}
Supported by National Key R\&D Program of China
No.\,2023YFA1010400 and NSFC No.\,12322121.

\section{Explicit Majorana fermions} \label{sec2}

We recall the constructions of Majorana fermions $\psi_1,...,\psi_{N+1}\in\mathbb C^{L\times L}$, which are deterministic Hermitian matrices satisfying the relation
\[
\psi_i\psi_j+\psi_j\psi_i=2\delta_{ij}\,.
\]
Having $\{\psi_i\}_{i=1}^{N+1}$ at hand, for $A=\{i_1,...,i_q\}\subset\qq{N}$, $i_1<i_2<\cdots<i_q$, one can simply set
\[
\Psi_A=\ii^{q/2}\psi_{i_1}\cdots\psi_{i_q}\footnote{In this construction, we make use of the operators $\psi_1,...,\psi_N$, but one can choose any $N$ of the $N+1$ Majorana fermions.}\,,
\]
which are Hermitian and satisfy \eqref{1.1}.

The construction is inductive in dimension. For each $m\in \bb N$, it suffices to find $\psi_1^{(m)},...,\psi_{2m+1}^{(m)}\in \mathbb C^{2^m\times 2^m}$ with $\psi^{(m)}_i\psi^{(m)}_j+\psi^{(m)}_j\psi^{(m)}_i=2\delta_{ij}$. For $m=0$, simply choose $\psi_1^{(0)}=1$. The induction step is 
\begin{equation} \label{2.1}
	\psi_i^{(m)}=\begin{pmatrix}
		0  &-\ii \psi_i^{(m-1)}\\
		\ii \psi_i^{(m-1)} &0
	\end{pmatrix} \quad \mbox{for}\  1\leq i\leq 2m-1\,, 
\end{equation}
and
\begin{equation} \label{psi^m}
	\psi_{2m}^{(m)}=\begin{pmatrix}
		0 &I_{2^{m-1}}\\
		I_{2^{m-1}} &0
	\end{pmatrix}\,, \quad \psi_{2m+1}^{(m)}=\begin{pmatrix}
		I_{2^{m-1}} &0\\
		0 & -I_{2^{m-1}}
	\end{pmatrix}\,.
\end{equation}

\subsection{Law invariance of \texorpdfstring{$H$}{H}}
Let $m\geq 1$. For $\b a\in \bb R^{k}$, $1\leq k\leq 2m+1$ we write
\begin{equation} \label{gammaa}
	\psi^{(m)}(\b a)=\sum_{i=1}^{k} a_i\psi^{(m)}_i\,.
\end{equation}
It is easy to see that for $|\b a|=1$
\begin{equation} \label{2.4}
	\psi^{(m)}(\b a)^*=\psi^{(m)}(\b a)\,, \quad \psi^{(m)}(\b a)^2=I\,, \quad \psi^{(m)}(\b a)\psi^{(m)}(\b z) \psi^{(m)} (\b a)=\psi^{(m)}(2\langle \b a,\b z\rangle \b a-\b z)
\end{equation}
for all $\b z\in \bb R^k$.

In particular, for $|\b a|=1$, $k=2m=N$ and $\psi_i^{(m)}=\psi_i$, we have
\[
\psi(\b a)\Psi_A \psi(\b a)=\Big(1-2\sum_{i \in A}a_i^2\Big)\Psi_A-2\sum_{i \in A, j\notin A} a_ia_j \Psi_A \psi_i \psi_j\,,
\]
where $\Psi_A \psi_i \psi_j\in \{\pm \Psi_{A\triangle \{i,j\}}\}$. Note that $\tr (\Psi_A \Psi_B)=L\delta_{AB}$. Thus the map $T(X)=\psi(\b a)X\psi(\b a)$ preserves the real vector space generated by $\{\Psi_A: A\subset\qq{N},|A|=q\}$ as well as the inner product $\langle X,Y \rangle\deq L^{-1}\tr (XY)$.  As $J_A$ are i.i.d. Gaussian random variables, it is not hard to see that
\begin{equation}\label{invariance}
		H\overset d=UHU^*
	\end{equation}
	for every finite product $U$ of matrices $\psi(\b a)$, $\b a\in \bb S^{N-1}$.
	
	In the squeal, we call a finite product of $\psi^{(m)}(\b a)$, $\b a\in \bb S^{2m-1}$ a spin rotation of type $m$; a product of an even number of matrices $\psi^{(m)}(\b a)$, $\b a\in \bb S^{2m}$ is called a general spin rotation of type $m$.

\subsection{A random unitary and its associated bound}
Next we construct a particular (random) unitary $\mathcal U$ satisfying \eqref{invariance}, which will be used in the main argument of the proof. In practice, we recursively construct, for each $m\in\mathbb N$, a general spin rotation of type $m$, $U_m$. In the end, set
\begin{equation}\label{calU}
\mathcal U=\diag(U_{N/2-1},U_{N/2-1})\,.
\end{equation}
To see that $\cal U$ is indeed a spin rotation of type $N/2$, simply note that for $\b a,\b b\in \bb S^{N-2}$,
\[
\diag (\psi^{(N/2-1)}(\b a)\psi^{(N/2-1)}(\b b), \psi^{(N/2-1)}(\b a)\psi^{(N/2-1)}(\b b))=\psi ((\b a,0))\psi ((\b b,0))\,.
\]

\begin{lemma}\label{moment}
For every $m\geq0$, there is a general spin rotation of type $m$, $U_m$, such that the following holds. For any deterministic $A\in\mathbb C^{2^m\times2^m}$, set $\mu_m(A)\deq2^{-m}\|A\|_{\mathrm{HS}}^2$. We have
\begin{equation}\label{momentbound}
\mathbb E|\tr(AU_m)|^{2k}
\leq \mu_m(A)^k
\big[(1+4m/k)\big]^{k(k-1)/2}
\end{equation}
for all integers $k\geq2$.
\end{lemma}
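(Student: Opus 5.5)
We take $U_0\deq 1$, and for $m\geq1$ we set recursively
\[
U_m\deq \psi^{(m)}(\b a)\psi^{(m)}(\b b)\,\diag(U_{m-1},U_{m-1})\,,
\]
where $\b a,\b b$ are independent and uniform on $\bb S^{2m}$, and independent of $U_{m-1}$. We first check that $U_m$ is a general spin rotation of type $m$. The identity displayed after \eqref{calU}, applied at level $m$, shows that $\diag(\psi^{(m-1)}(\b c)\psi^{(m-1)}(\b d),\psi^{(m-1)}(\b c)\psi^{(m-1)}(\b d))=\psi^{(m)}((\b c,0))\psi^{(m)}((\b d,0))$. This handles $\b c,\b d$ in the span of $\psi_1,\dots,\psi_{2m-1}$. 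The $\psi_{2m-1}^{(m-1)}$ direction is treated the same way: by \eqref{2.1}, $\psi^{(m)}_{2m}\psi^{(m)}_{2m+1}$ and the products $\psi^{(m)}_{i}\psi^{(m)}_{2m+1}$ reproduce products $\diag(X,X)$ of pairs of level-$(m-1)$ fermions. Hence $\diag(U_{m-1},U_{m-1})$ is an even product of level-$m$ matrices $\psi^{(m)}(\cdot)$ with unit vectors in $\bb S^{2m}$, and so is $U_m$. The base case of \eqref{momentbound} is trivial: $|\tr(AU_0)|^{2k}=|A|^{2k}=\mu_0(A)^k$.

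For the induction step, write $V\deq\psi^{(m)}(\b a)\psi^{(m)}(\b b)$ and decompose $AV$ into $2^{m-1}\times2^{m-1}$ blocks $(AV)_{ij}$. Then
\[
\tr(AU_m)=\tr(B_VU_{m-1})\,,\qquad B_V\deq (AV)_{11}+(AV)_{22}\,.
\]
Conditioning on $V$ and applying the induction hypothesis gives
\[
\bb E|\tr(AU_m)|^{2k}\leq \big(1+4(m-1)/k\big)^{k(k-1)/2}\,\bb E_V\,\mu_{m-1}(B_V)^k\,.
\]
It therefore suffices to prove
\[
\bb E_V\,\mu_{m-1}(B_V)^k\leq \mu_m(A)^k\Big(\frac{1+4m/k}{1+4(m-1)/k}\Big)^{k(k-1)/2}\,.
\]
Since $(1+\frac{4}{k+4(m-1)})^{k(k-1)/2}\geq \exp\big(c\,\frac{k^2}{k+4m}\big)$, it is enough to show that $\mu_{m-1}(B_V)/\mu_m(A)$ has mean $1$ and $k$-th moment at most $\exp\big(k(k-1)\cdot\tfrac{2}{k+4m}\cdot(1+o(1))\big)$. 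This is the moment growth of a variable $1+X$ with $X$ of variance $O(1/m)$ and sub-exponential tails at scale $1/m$.

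The mean computation runs as follows. Expand $V=\sum_{i,j}a_ib_j\psi_i\psi_j$ and use $\bb E a_ia_{i'}=\delta_{ii'}/(2m+1)$. Then $\bb E\,\mu_{m-1}(B_V)$ is $2^{-(m-1)}(2m+1)^{-2}\sum_{i,j}\|(A\psi_i\psi_j)_{11}+(A\psi_i\psi_j)_{22}\|^2_{\mathrm{HS}}$. We then use the orthogonality $\tr(\Psi_S\Psi_T)=2^m\delta_{ST}$ of the Clifford monomials, together with the fact that $\psi_i\psi_j$, $i\neq j$, runs over all degree-two monomials. The cross terms between the two diagonal blocks should average out, giving $\bb E\,\mu_{m-1}(B_V)=\mu_m(A)$ up to a factor $1+O(1/m)$, which the slack in the ratio above absorbs. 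The terms with $i=j$ contribute the identity, of weight $1/(2m+1)$.

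The main obstacle is the higher-moment bound. As a function of $(\b a,\b b)$, $\mu_{m-1}(B_V)$ is a polynomial of degree two in $\b a$ and degree two in $\b b$, with mean about $\mu_m(A)$. I would bound the centered part $X$ by splitting it into its degree-$(2,0)$, $(0,2)$ and $(2,2)$ chaos components on the product of spheres. Each component has Hilbert--Schmidt coefficient norm $O(\mu_m(A)/m)$, again by the orthogonality of the $\Psi_S$. I would then apply hypercontractivity on the sphere, which gives $\|X\|_{L^p}\leq (p-1)^{d/2}\|X\|_{L^2}$ for degree $d$; alternatively, one can compute the moments of quadratic forms of uniform spherical vectors explicitly by comparison with Gaussians $\b g/|\b g|$. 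Feeding $\|X\|_{L^p}\lesssim p/m$ into $\bb E(1+X)^k=\sum_{\ell}\binom{k}{\ell}\bb E X^\ell$ should yield the factor $(1+4/(k+4m))^{k(k-1)/2}$. The constant $4$ in \eqref{momentbound} must come out of this calculation, so it must be tracked carefully. If the direct chaos bound is too lossy, the fallback is to exploit the i.i.d.\ structure: split $V$ into many independent small rotations, each perturbing $\mu$ by $O(1/m)$, and multiply the resulting moment bounds.
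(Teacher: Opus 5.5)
There is a genuine gap, and it is in the construction itself, before any moment estimate. With $\b a,\b b$ uniform on $\bb S^{2m}$, your $U_m$ does not satisfy \eqref{momentbound}.

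Take $m=1$, $k=2$, $A=I_2$, so $\mu_1(A)=1$. Then $U_1=\psi^{(1)}(\b a)\psi^{(1)}(\b b)$ and $\tr U_1=2\langle\b a,\b b\rangle$, which is twice a uniform variable on $[-1,1]$. Hence $\bb E|\tr U_1|^4=16/5$, while the right-hand side of \eqref{momentbound} is $3$.

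The mechanism is structural. By \eqref{2.1}, $\psi^{(m)}_i\psi^{(m)}_j=\diag(\psi^{(m-1)}_i\psi^{(m-1)}_j,\psi^{(m-1)}_i\psi^{(m-1)}_j)$ for $i, j\leq 2m-1$. A uniform vector on $\bb S^{2m}$ has coordinates $a_{2m},a_{2m+1}$ of size only $O(m^{-1/2})$. So $V$ is block diagonal with two equal blocks, up to $O(m^{-1/2})$. The two diagonal blocks of $AV$ therefore add coherently instead of averaging out; this is exactly the step you asserted without computing.

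For $A=I$ one has, pointwise, $\mu_{m-1}(B_V)=4\big[\langle\b a,\b b\rangle^2+\sum_{i<j\leq 2m-1}(a_ib_j-a_jb_i)^2\big]$. Hence
\[
\bb E_V\,\mu_{m-1}(B_V)=\frac{4(4m^2-4m+3)}{(2m+1)^2}\,\mu_m(A)\longrightarrow 4\,\mu_m(A)\,.
\]
So the per-step inequality you reduce to is false for every $m\geq1$. For $m\geq2$ Jensen at $k=2$ already shows this, since the required right-hand side is only $\frac{2m+1}{2m-1}\mu_m(A)^2$. The later factors $\diag(U_{m-1},U_{m-1})=I_2\otimes U_{m-1}$ never act on the block index, so nothing downstream repairs the defect. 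Even a genuine $1+c/m$ bias could not simply be ``absorbed'': at $k=2$ the available slack per step is itself only $1+\frac{2}{2m-1}$.

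The missing idea is to make the two random reflections act on the block index with order-one strength. The paper takes $\b a$ to be the bisector of a uniform $\b x\in\bb S^{2m}$ and $e_{2m+1}$. It takes $\b b$ to be the bisector of a uniform $\b y\in\bb S^{2m-1}$ (padded by a zero) and $e_{2m}$.

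Then $\gamma(\b a)\gamma_{2m+1}\gamma(\b a)=\gamma(\b x)$, and $\gamma(\b b)$ is block off-diagonal with unitary blocks $Q,Q^*$. This yields the exact factorization $\mu_{m-1}(B)=\mu_m(A)(1+\b b_A\cdot\b x)(1+\b c_{A,\b x}\cdot\b y)$ with $|\b b_A|,|\b c_{A,\b x}|\leq1$. Each factor is affine in a uniform spherical vector, so its $k$-th moment is bounded by the explicit Beta moment $\bb E(1+x_1)^k=\prod_{i=1}^{k-1}\big(1+\frac{i}{2m+i}\big)$, and by the analogue with $2m-1$ for the $\b y$ factor.

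The paper inducts on the product $D_{m,k}$ of these factors rather than on the final bound, and applies Jensen once at the end. This gives exactly $(1+4m/k)^{k(k-1)/2}$. No chaos decomposition or hypercontractivity is needed, and those tools would not produce the sharp constant anyway.

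A minor point: in your spin-rotation check, $\psi^{(m)}_{2m}\psi^{(m)}_{2m+1}$ is block off-diagonal, not of the form $\diag(X,X)$. That remark is unnecessary anyway, since \eqref{2.1} already covers all $2m-1$ generators of level $m-1$.
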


\begin{proof}
Set $U_0=1$. For $m\geq1$, abbreviate 
\[
\gamma_i\equiv \psi_i^{(m)}\,,\quad \mbox{and} \quad \eta_i\equiv \psi_i^{(m-1)}\,,
\] 
and the notation \eqref{gammaa} also extends to $\gamma$ and $\eta$. Let $\b x=(x_1,...,x_{2m+1})$ and $\b y=(y_1,...,y_{2m})$ be independent random vectors, uniformly distributed on $\mathbb S^{2m}$ and $\mathbb S^{2m-1}$ respectively\footnote{Implicitly, in this recursive generation, any new random variables are assumed to be independent of the history, and the SYK model $H$. In particular, here $\b x,\b y$ are independent from $U_{m-1}$.}. Append a zero coordinate to $\b y$, and set
\[
\b a=\frac{\b x+e_{2m+1}}{|\b x+e_{2m+1}|},\qquad
\b b=\frac{\b y+e_{2m}}{|\b y+e_{2m}|},\qquad
U_m=\gamma(\b a)\gamma(\b b)\diag(U_{m-1},U_{m-1}).
\]
Here the probability that either denominator vanishes is $0$. 

We shall prove that
\begin{equation}\label{2.8}
\mathbb E|\tr(AU_m)|^{2k}\leq D_{m,k}\mu_m(A)^k,\quad
D_{m,k}\deq\prod_{r=1}^{2m}\prod_{i=1}^{k-1}\Big(1+\frac{i}{r+i}\Big)\,.
\end{equation}
The estimate
\[
\begin{aligned}
	\log D_{m,k}
	\leq\sum_{i=1}^{k-1}i\int_0^{2m}\frac{dx}{i+x}
	=\sum_{i=1}^{k-1}i\log(1+2m/i)
	\leq\frac{k(k-1)}2\log(1+4m/k)
\end{aligned}
\]
then gives the desired result. Here in the third step we used Jensen's inequality.

The proof of \eqref{2.8} again follows from an induction on $m$. As $U_0=1$ and $D_{0,k}=1$, the statement is clearly true for $m=0$. Assume the result at $m-1$. Set $\b y'=(y_1,\ldots,y_{2m-1})$ and 
the unitary matrix
\[
Q\deq\frac{(1+y_{2m})I-\ii\eta(\b y')}{\sqrt{2(1+y_{2m})}}\,.
\] 
By \eqref{2.1}, \eqref{psi^m} and the last relation of \eqref{2.4}, we have
\[
\gamma(\b a)\gamma_{2m+1}\gamma(\b a)=\gamma(\b x),\qquad
\gamma(\b b)=\begin{pmatrix}0&Q\\Q^*&0\end{pmatrix},\qquad
(Q^*)^2=y_{2m}I+\ii\eta(\b y')\,.
\]
Let $A\ne0$, and put $T=A\gamma(\b a)$. Use $T_{ij}$ to denote the four $2^{m-1}\times2^{m-1}$ blocks of $T$. Define $B=T_{12}Q^*+T_{21}Q$. It is not hard to see that $
\tr(AU_m)=\tr(BU_{m-1}).
$
As a result,
\begin{equation} \label{ind}
\mathbb E\big(|\tr(AU_m)|^{2k}\mid\b x,\b y\big)
\leq D_{m-1,k}\mu_{m-1}(B)^k.
\end{equation}
Thus the next natural target is relating $\mu_m(A)$ to $\mu_{m-1}(B)$. 

To this end, set $v_{\b x}=\|T_{12}\|_{\textrm{HS}}^2+\|T_{21}\|_{\textrm{HS}}^2$. We have
\[
2v_{\b x}=\|T\|_{\textrm{HS}}^2-\tr (T^*\psi_{2m+1}^{(m)}T\psi_{2m+1}^{(m)})=\|A\|_{\textrm{HS}}^2-\tr (A^*\psi_{2m+1}^{(m)}A\gamma(\b x))
\]
Set $(\b b_A)_i=-\tr (A^*\psi_{2m+1}^{(m)}A\gamma_i)/\|A\|_{\textrm{HS}}^2$. Then
\begin{equation} \label{2.9}
	\frac{2v_{\b x}}{\|A\|_{\textrm{HS}}^2}=1+\b b_A \cdot \b x\,.
\end{equation}
Since the LHS of the above is nonnegative and $|\b x|=1$, we have $|\b b_A|\leq 1$. Similarly, 
\begin{equation} \label{2.10}
	\|B\|_{\textrm{HS}}^2=v_{\b x}+2\re \tr (T_{21}^*T_{12}(Q^*)^2)=v_{\b x}+2\re \tr (T_{21}^*T_{12}(y_{2m}I_{2^{m-1}}+\ii \eta (\b y')))\,.
\end{equation}
Now, if $v_{\b x}>0$, similar to \eqref{2.9}, we get from \eqref{2.10} that $\|B\|_{\textrm{HS}}^2/v_{\b x}=1+\b c_{A,\b x}\cdot \b y$ for some $|\b c_{A,\b x}|\leq 1$. If $ v_{\b x}=0$, we get from the definition of $B$ that $B=0$, and we set $\b c_{A,\b x}=\b 0$. In both cases of $v_{\b x}$, we obtained that
\begin{equation} \label{2.12}
\mu_{m-1}(B)=\mu_m(A)(1+\b b_A \cdot \b x)(1+\b c_{A,\b x}\cdot \b y)\,,\quad |\b b_A|\,,|\b c_{A,\b x}|\leq 1\,.
\end{equation}

Note that spherical symmetry gives
\[
\mathbb E(1+\b b_A \cdot \b x)^k
=\mathbb E(1+|\b b _A|x_1)^k
\leq\mathbb E(1+x_1)^k
=\prod_{i=1}^{k-1}\Big(1+\frac{i}{2m+i}\Big)\,,
\]
where the second step follows by expanding in even powers of $|\b b _A|$. The analogue also holds for moments of $1+\b c_{A,\b x}\cdot \b y$. Thus \eqref{ind} and \eqref{2.12} imply
\[
\begin{aligned}
\mathbb E|\tr(AU_m)|^{2k}
\leq D_{m-1,k}\mu_m(A)^k\prod_{i=1}^{k-1}\Big(1+\frac{i}{2m+i}\Big)\Big(1+\frac{i}{2m-1+i}\Big)
=D_{m,k}\mu_m(A)^k.
\end{aligned}
\]
This proves \eqref{2.8} and thus finishes the proof.
\end{proof}

\section{Proof of the main result}

With the help of $\mathcal U$ and the related estimates in Section \ref{sec2}, we can now prove Theorem \ref{thm1.1}, with the aid of the following bound on the eigenvalue multiplicity.

\begin{lemma}\label{multiplicity}
The eigenspaces of $H$ have dimension at most two almost surely.
\end{lemma}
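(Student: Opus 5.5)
We prove the statement by the standard algebraic-genericity argument: the event that $H$ has an eigenvalue of multiplicity at least three is the zero set of a polynomial in the Gaussian coefficients $(J_A)$. Such a zero set has Lebesgue measure zero unless the polynomial vanishes identically. It therefore suffices to exhibit a single choice of real coefficients $(J_A)$ for which no eigenvalue has multiplicity three or more.

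\emph{Step 1 (the polynomial).} Write $\lambda_1,\dots,\lambda_L$ for the eigenvalues of $H$ and set
\[
F\deq\prod_{\alpha<\beta<\gamma}\big((\lambda_\alpha-\lambda_\beta)^2+(\lambda_\beta-\lambda_\gamma)^2\big)\big((\lambda_\alpha-\lambda_\gamma)^2+(\lambda_\gamma-\lambda_\beta)^2\big)\big((\lambda_\beta-\lambda_\alpha)^2+(\lambda_\alpha-\lambda_\gamma)^2\big)\,.
\]
The product over the three orderings makes $F$ symmetric in the eigenvalues. Hence $F$ is a polynomial in the coefficients of the characteristic polynomial of $H$, and so a polynomial in the entries of $H$, which are linear in $(J_A)$. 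For real eigenvalues, $F=0$ exactly when some three eigenvalues coincide, i.e.\ some eigenspace has dimension at least three. Since the law of $(J_A)$ is absolutely continuous with respect to Lebesgue measure, it remains to show that $F\not\equiv0$.

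\emph{Step 2 (a commuting test point).} Set $n=N/2$, $p=q/2\in\{1,\dots,n-1\}$, and let $n_j\deq\Psi_{\{2j-1,2j\}}$ for $j\in\qq{n}$. By \eqref{1.1} the $n_j$ pairwise commute (their index sets are disjoint), are Hermitian, and satisfy $n_j^2=I$. For $S\subset\qq n$ with $|S|=p$, let $A_S=\bigcup_{j\in S}\{2j-1,2j\}$; then $\Psi_{A_S}=\pm\prod_{j\in S}n_j$, and absorbing the sign into $J_{A_S}$ is harmless. Choose $J_A=0$ unless $A=A_S$ for some such $S$. The $n_j$ are simultaneously diagonalizable with joint eigenvalues $s\in\{\pm1\}^n$. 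Since $\prod_j n_j$ is proportional to $\psi_1\cdots\psi_N$, which is traceless and squares to a multiple of $I$, and $L=2^n$, I expect each $s$ to occur exactly once. This holds for $N=2$ by \eqref{2.1}--\eqref{psi^m}, and follows in general by counting traces of products of the $n_j$: $\tr\prod_{j\in T}n_j=0$ for $T\neq\emptyset$. The eigenvalue attached to $s$ is
\[
E(s)=\sum_{|S|=p}J_S\prod_{j\in S}s_j\,.
\]

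\emph{Step 3 (combinatorics).} For generic $(J_S)$, we have $E(s)=E(s')$ iff $\prod_{j\in S}s_j=\prod_{j\in S}s'_j$ for all $S$ with $|S|=p$. Write $t=ss'$ and let $M=\{j:t_j=-1\}$; the condition is that $|S\cap M|$ is even for every $p$-subset $S$. Suppose $\emptyset\ne M\ne\qq n$. Since $1\le p\le n-1$, we can pick a $p$-set $S$ and indices $i\in M\cap S$, $j\notin M$, $j\notin S$. Swapping $i$ for $j$ changes $|S\cap M|$ by one, so some $p$-set has $|S\cap M|$ odd. Hence $E(s)=E(s')$ forces $s'=s$ or $s'=-s$. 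So each eigenvalue has multiplicity at most two at this point, $F\ne0$ there, and thus $F\not\equiv0$.

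The step I expect to need the most care is Step 2: verifying that the joint spectrum of $(n_j)$ is simple, i.e.\ each sign pattern appears exactly once. I would do this by the trace computation above, using $\tr(\Psi_A\Psi_B)=L\delta_{AB}$ from Section \ref{sec2}. Step 1 is routine once symmetry of $F$ is ensured.
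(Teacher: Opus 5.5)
Your proposal is correct and follows essentially the same route as the paper: the commuting test point $\sum_{|S|=q/2}J_S\prod_{j\in S}\ii\psi_{2j-1}\psi_{2j}$, the one-element swap argument forcing $\b z'=\pm\b z$, and the reduction from a single good realization to an almost-sure statement. The differences are only in detail. You use generic coefficients where the paper uses $2^\ell$. You also spell out two points the paper leaves implicit: that each sign pattern occurs exactly once, via $\tr\prod_{j\in T}n_j=0$ for $T\neq\emptyset$, and an explicit symmetric polynomial $F$ whose zero set is the bad event.
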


\begin{proof}
Write $m=N/2$, $r=q/2$, and $\Gamma_i=\ii\psi_{2i-1}\psi_{2i}$ for all $i=1,2,...,m$. We see that the $\Gamma_i$ form a family of commuting unitary Hermitian matrices, and thus they have a common orthonormal eigenbasis, with eigenvalues $\pm1$. Enumerate the $r$-element subsets of $\qq{m}$ as $I_1,...,I_M$, where $M=\binom{m}{r}$. Write 
\[
H_0=\sum_{\ell=1}^M 2^{\ell} \prod_{i \in I_{\ell}} \Gamma_i \quad \mbox{with eigenvalues}\quad  \lambda_{\b z} = \sum_{\ell=1}^M 2^{\ell} \prod_{i \in I_{\ell}} z_i\,, \quad \b z\in \{-1,1\}^{m}\,.
\]
Note that for $\sum_{\ell=1}^{M}$, each power exceeds the sum of the smaller powers. Thus, setting $h_i=z_i z_i'$, we have
\begin{equation} \label{3.5}
	\lambda_{\b z}=\lambda_{\b z'} \quad \mbox{iff} \quad \prod_{i\in I_{\ell}}h_i=1\,, \ \mbox{for all}\ \ell\in \qq{M}\,.
\end{equation}
For each $i\neq j$, applying the RHS of \eqref{3.5} to $I_{\ell_1}$ and $I_{\ell_2}$ with $I_{\ell_1}\triangle I_{\ell_2}=\{i,j\}$, we see that $h_i=h_j$. Thus $\lambda_{\b z}=\lambda_{\b z'}$ implies $\b z=h\b z'$ for some $h\in \{-1,1\}$. The relation $\prod_{i\in I_{\ell}}h_i=1$ further shows $h^r=1$.  As a result, $H_0$ has multiplicities at most 2; when $r$ is odd, $H_0$ has simple spectrum.

Note that $H_0$ is a one-point realization of $H$. Since $H_0$ has multiplicities at most two, the set of coefficients $(J_A)$ for which $H$ has an eigenvalue of multiplicity at least three has Lebesgue measure zero (in dimension {$N\choose q$}) . As $J_A$ are independent and absolutely continuous, we finish the proof.
\end{proof}

\begin{proof}[Proof of Theorem \ref{thm1.1}]
Let a deterministic unit vector $\b w\in\mathbb C^L$ be given. Let $\mathcal U$ be as in \eqref{calU}, and independent of $H$. By \eqref{invariance}, we have $H\overset d=\mathcal U H\mathcal U^*$. For a unit vector $\b u\in\mathbb C^L$, write $\b w_+,\b w_-$ and $\b u_+,\b u_-$ for the two blocks of $\b w$ and $\b u$ in $\bb C^{L/2}$. Set $A\deq\b u_+\b w_+^*+\b u_-\b w_-^*$. We have
\[
\langle\b w,\mathcal U\b u\rangle=\tr(AU_{N/2-1})\,.
\]
As $\|A\|_{\mathrm{HS}}\leq|\b u_+||\b w_+|+|\b u_-||\b w_-|\leq1$, we get $\mu_{N/2-1}(A)\leq2/L$. Lemma \ref{moment} then gives
\begin{equation} \label{lll}
\mathbb E|\langle\b w,\mathcal U\b u\rangle|^{2k}
\leq (2/L)^k\big[(1+2(N-2)/k)\big]^{k(k-1)/2}.
\end{equation}

Choose an orthonormal eigenbasis $(\b u_\alpha)$ of $H$. As $H\overset d=\mathcal U H\mathcal U^*$, there is an orthonormal eigenbasis $(\b v_\alpha)$ of $\cal UH\cal U^*$ such that $(\b u_\alpha) \overset{d}{=}(\b v_{\alpha})$. By Lemma \ref{multiplicity}, every $\b v_\alpha$ is a linear combination of at most two $\cal U\b u_{i}$. By Cauchy Schwarz and taking maximum over $\alpha$, we have $\max_{\alpha}|\langle \b w,\b v_\alpha\rangle |\leq \sqrt2\max_{\alpha}|\langle\b w,\mathcal U\b u_{\alpha}\rangle|$. Applying \eqref{lll} with $\b u=\b u_{\alpha}$ and Markov's inequality at the scale $(2L)^{-1/2}\e^t$ yields 
\[
\begin{aligned}
	&\mathbb P\Big(\max_{\substack{\alpha}}|\langle\b w,\b u_\alpha\rangle|\geq L^{-1/2}\e^t\Big)=\mathbb P\Big(\max_{\substack{\alpha}}|\langle\b w,\b v_\alpha\rangle|\geq L^{-1/2}\e^t\Big)\leq  \bb P\Big(\max_{\alpha}|\langle\b w,\mathcal U\b u_{\alpha}\rangle|\geq (2L)^{-1/2}\e^t\Big)\\	
	\leq &\,\sum_{\alpha}  P\Big(|\langle\b w,\mathcal U\b u_{\alpha}\rangle|\geq (2L)^{-1/2}\e^t\Big) \leq L4^k\big[(1+2(N-2)/k)\big]^{k(k-1)/2}\e^{-2kt}.
\end{aligned}
\]
Set $k=4c\sqrt{N/\log N}+O(1)$ and $t=c\sqrt{N\log N}$. Note that
\[
\log L+k\log4+\frac{k(k-1)}2\log(1+2(N-2)/k)-2kt=\Big(-4c^2+\frac{\log 2}{2}+o(1)\Big)N\,,
\]
and thus we get \eqref{thmbound} as desired.
\end{proof}

\end{document}